\newenvironment{proof}{\noindent{\em \textbf{Proof.}}}{\quad \hfill$\Box$\vspace{2ex}}
\newtheorem{theorem}{Theorem}[section]
\newtheorem{definition}[theorem]{Definition}
\newtheorem{remark}[theorem]{Remark}
\newtheorem{method}{\bf Method}
\numberwithin{equation}{section}
\def\i {\mathbf{i}}
\def \bme {\mathbf{e}}
\newcommand{\abs}[1]{\left|#1\right|}
\title{Monogenic Signal Associated with Linear Canonical Transform and  Application to Edge Detection Problems}
\author[a]{\small  Dong Cheng\thanks{chengdong720@163.com}}
\author[b]{\small Kit Ian Kou\thanks{kikou@umac.mo}}
\affil[a]{\small{Department of Mathematics, Faculty of Arts and Sciences, Beijing Normal University, Zhuhai 519087, China}}
\affil[b]{\small{Department of Mathematics, Faculty of Science and Technology, University of Macau, Macao, China}}
\date{}
\begin{document}
  \maketitle
\begin{abstract}
\normalsize
Monogenic signal is regarded as a generalization of analytic signal from the one dimensional space to the high  dimensional space.
It is defined by an original signal with the combination of Riesz transform. Then it provides the signal features representation, such as the local attenuation and the local phase vector.
The main objective of this study is to analyze the local phase vector and the local attenuation in the high  dimensional spaces. The differential phase congruency is applied for the edge detection problems.
\end{abstract}

 \begin{keywords}
Monogenic signal, Linear canonical transform, Edge detection
\end{keywords}

\begin{msc}
15A67, 42B10, 30G35
\end{msc}

\section{Analytic signal associated with LCT in one dimensional case}

 Let
$ \{a,b,c,d\}$ be  real parameters satisfying $ad-bc=1$. The linear canonical transform (LCT) \cite{moshinsky1971linear,healy2016linear} of the integrable signal $f$ is defined by
\begin{equation*}
  F^{(a,b,c,d)}(\omega)=\mathscr{L}^{(a,b,c,d)}(f)(\omega):=
 \begin{cases}
 \int_{\mathbf{R}}\mathcal{K}^{(a,b,c,d)}(\omega,x)f(x)dx,& b\neq 0;\\
\sqrt{d}e^{\frac{\i cd}{2}\omega^2}f(d\omega), &b=0;
\end{cases}
\end{equation*}
where
\begin{equation*}
   \mathcal{K}^{(a,b,c,d)}(\omega,x):=\frac{1}{\sqrt{\i2\pi b}}e^{\i(\frac{d}{2b}\omega^2-\frac{1}{b}\omega x+\frac{a}{2b}x^2)}.
\end{equation*}
Note that when $b=0$, the LCT is just a chirp multiplication.  Given the similarity between  the   $b>0$ case and and $b<0$ case, without loss of generality, we always suppose $b>0$ in the following sections.

\begin{remark}
 To   eliminate  the ambiguity of the root sign, we specify $\sqrt{z}=\abs{z}^{\frac{1}{2}}e^{\frac{\i\arg{z}}{2}}$ for every nonzero complex number $z$,  where $\arg{z}$ is the principal argument of $z$ on the  interval $(-\pi,\pi]$.
\end{remark}
If $F^{(a,b,c,d)}$ is also integrable,  then the inversion LCT formula holds, that is
\begin{equation*}
  f(x)=\int_{\mathbf{R}}\mathcal{K}^{(d,-b,-c,a)}(x,\omega)F^{(a,b,c,d)}(\omega)d\omega,~~~~\mathrm{a.e.}
\end{equation*}

From the well-known Plancherel theorem, we know that the LCT can be  extended to $L^2(\mathbf{R})$, the set of square integrable signals. If $f,g \in L^2(\mathbf{R})$, the Parseval's identity gives
\begin{equation*}
  \int_{\mathbf{R}}\abs{f(x)}^2dx=\int_{\mathbf{R}}\abs{F^{(a,b,c,d)}(\omega)}^2d\omega.
\end{equation*}

The LCT has been found wide applications in  signal processing \cite{shinde2011two,shi2012sampling}. On the one hand, the LCT is a  generalization
of many famous linear integral transforms, such as Fourier transform, fractional Fourier transform and Fresnel transform, etc. On the other hand, it is more  flexible for its
extra three degrees of freedom, without increasing the complexity
of the computation (same as  conventional FT).

The parameter $(a,b)$-Hilbert transform (PHT) \cite{fu2008generalized} of   $f$ is defined by
\begin{equation*}
   \mathscr{H}^{(a,b)}(f)(x):=\frac{e^{-\i\frac{a }{2b}x^2}}{\pi}\mathrm{p.v.}\int_{\mathbf{R}}\frac{f(t)e^{\i\frac{a }{2b}t^2}}{x-t}dt=\frac{e^{-\i\frac{a }{2b}x^2}}{\pi} \lim_{\epsilon\to 0^+}\int_{\abs{x-t}>\epsilon}\frac{f(t)e^{\i\frac{a }{2b}t^2}}{x-t}dt,
\end{equation*}
provided the integral exists as a principal value (p.v. means the Cauchy principal value).

It had been shown that   $\mathscr{H}^{(a,b)}$ is an isometry on $L^2(\mathbf{R})$, that is,
 $\mathscr{H}^{(a,b)}$  is a    bijection on $L^2(\mathbf{R})$ and
\begin{equation*}
  \int_{\mathbf{R}}\abs{f(t)}^2dt=\int_{\mathbf{R}}\abs{ \mathscr{H}^{(a,b)}(f)(x)}^2dx
\end{equation*}
for every $f\in  L^2(\mathbf{R})$.

The generalized analytic signal (GAS) \cite{fu2008generalized} associated with LCT is defined as

\begin{equation*}
   f_{\mathscr{A}}^{(a,b)}(x):=f(x)+\i \mathscr{H}^{(a,b)}(f)(x).
\end{equation*}

In fact, the GAS $f_{\mathscr{A}}^{(a,b)}(x)$ can suppress the negative frequency components of $F^{(a,b,c,d)}(\omega)$ in the LCT domain. Therefore if $F^{(a,b,c,d)}\in L^1(\mathbf{R})$, then $f_{\mathscr{A}}^{(a,b)}(x)$ can be expressed as
\begin{equation*}
   f_{\mathscr{A}}^{(a,b)}(x)=2 \int_{0}^{\infty}\mathcal{K}^{(d,-b,-c,a)}(x,\omega)F^{(a,b,c,d)}(\omega)d\omega,~~~~\mathrm{a.e.}
\end{equation*}

In this section, we will show that $f_{\mathscr{A}}^{(a,b)}(x)$ can be extended to a function which is  holomorphic  on the upper half plane. Define
\begin{equation}\label{GAF}
   f_{\mathscr{A}}^{(a,b)}(z):=2 \int_{0}^{\infty}\mathcal{K}^{(d,-b,-c,a)}(z,\omega)F^{(a,b,c,d)}(\omega)d\omega,
\end{equation}
where $z\in \mathbf{C} ^+ :=\{ x+\i y : x \in \mathbf{R},~ y>0\}$. Then we have the following theorem.

\begin{theorem}\label{Titchmash-LCT}
   If $f\in  L^2(\mathbf{R})$ and its  LCT $F^{(a,b,c,d)}\in L^1(\mathbf{R})$. Let $g(t)=e^{ \i\frac{a }{2b}t^2}f(t)$. Then
    $ f_{\mathscr{A}}^{(a,b)}(z)$ defined in (\ref{GAF}) is  holomorphic  on the upper half plane. Moreover,   $ f_{\mathscr{A}}^{(a,b)}(z)$ has the following representation:
    \begin{equation*}
      f_{\mathscr{A}}^{(a,b)}(x+\i y)=e^{-\i\frac{a }{2b}(x+\i y)^2}\left[g(x+\i y)+\i h(x+\i y)\right ]
    \end{equation*}
    where
    \begin{equation}\label{Poisson-integral}
      g(x+\i y)=\int_{\mathbf{R}}P_y(x-t)g(t)dt
    \end{equation}
    and
    \begin{equation}\label{Conj-Poisson-integral}
      h(x+\i y)=\int_{\mathbf{R}}Q_y(x-t)g(t)dt
    \end{equation}
    where $P_y(x)=\frac{1}{\pi}\frac{y}{x^2+y^2}$ and $Q_y(x)=\frac{1}{\pi}\frac{x}{x^2+y^2}$ are Poisson and conjugate Poisson kernel respectively.
\end{theorem}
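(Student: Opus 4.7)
The plan is to peel off the chirp factor $e^{-\i\frac{a}{2b}z^2}$ from the kernel, cancel the remaining chirp against the one hidden in $F^{(a,b,c,d)}$, and thereby reduce the defining integral \eqref{GAF} to the classical Poisson/conjugate Poisson representation of the upper half plane analytic extension associated with the ordinary Fourier transform of $g$.

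First I would substitute the explicit formula
$\mathcal{K}^{(d,-b,-c,a)}(z,\omega) = \frac{1}{\sqrt{-\i 2\pi b}}\,\exp\!\big(\i(-\tfrac{a}{2b}z^{2} + \tfrac{1}{b}z\omega - \tfrac{d}{2b}\omega^{2})\big)$
into \eqref{GAF} and factor $e^{-\i a z^{2}/(2b)}$ outside the integral. For the factor $F^{(a,b,c,d)}(\omega)$, I would note that
\[
F^{(a,b,c,d)}(\omega) = \frac{e^{\i d\omega^{2}/(2b)}}{\sqrt{\i 2\pi b}}\int_{\mathbf{R}} e^{-\i\omega t/b}\,g(t)\,dt = \frac{e^{\i d\omega^{2}/(2b)}}{\sqrt{\i 2\pi b}}\,G(\omega/b),
\]
where $G$ is the ordinary Fourier transform of $g(t)=e^{\i at^{2}/(2b)}f(t)$. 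Observe that $|g|=|f|$ so $g\in L^{2}$, and the hypothesis $F^{(a,b,c,d)}\in L^{1}$ together with $|\sqrt{\i 2\pi b}\sqrt{-\i 2\pi b}| = 2\pi b$ gives $G\in L^{1}\cap L^{2}$. The chirps in $\omega$ cancel, and after the change of variable $\xi=\omega/b$ one obtains
\[
f_{\mathscr{A}}^{(a,b)}(z) \;=\; e^{-\i\frac{a}{2b}z^{2}}\cdot\frac{1}{\pi}\int_{0}^{\infty} G(\xi)\,e^{\i\xi z}\,d\xi.
\]

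Holomorphicity on $\mathbf{C}^{+}$ then follows at once: for $z=x+\i y$ with $y>0$ the integrand is dominated by $|G(\xi)|\in L^{1}(0,\infty)$, so differentiation under the integral sign (or Morera's theorem) shows that $z\mapsto \int_{0}^{\infty}G(\xi)e^{\i\xi z}d\xi$ is holomorphic on the upper half plane; the prefactor $e^{-\i a z^{2}/(2b)}$ is entire.

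Finally, to recognise the inner integral as $g(x+\i y)+\i h(x+\i y)$ with $g(x+\i y),h(x+\i y)$ defined by \eqref{Poisson-integral}--\eqref{Conj-Poisson-integral}, I would use the standard Fourier symbols $\widehat{P_{y}}(\xi)=e^{-|\xi|y}$ and $\widehat{Q_{y}}(\xi)=-\i\,\sgn(\xi)\,e^{-|\xi|y}$. Since $G\in L^{1}\cap L^{2}$ the convolutions $P_{y}*g$ and $Q_{y}*g$ are well defined and have Fourier transforms $e^{-|\xi|y}G(\xi)$ and $-\i\sgn(\xi)e^{-|\xi|y}G(\xi)$; adding them multiplies $G$ by $(1+\sgn(\xi))e^{-|\xi|y}=2\chi_{(0,\infty)}(\xi)e^{-\xi y}$, and Fourier inversion (legitimate because the resulting spectrum is in $L^{1}$ for $y>0$) yields exactly $\frac{1}{\pi}\int_{0}^{\infty}G(\xi)e^{\i\xi(x+\i y)}d\xi$. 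The main obstacle I anticipate is purely bookkeeping: keeping track of the branch of $\sqrt{\i 2\pi b}$ so that the two square roots combine correctly to $2\pi b$ (this is where the convention in the Remark is used), and verifying integrability carefully enough to justify Fubini and Fourier inversion in one stroke.
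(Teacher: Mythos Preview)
Your proposal is correct and follows essentially the same route as the paper: both factor out the chirp $e^{-\i a z^{2}/(2b)}$, cancel the remaining $\omega$-chirp against the one in $F^{(a,b,c,d)}$, and reduce to the standard Fourier-side picture for $g$. The only cosmetic difference is that the paper packages the last step as an appeal to Titchmarsh's theorem, whereas you spell it out via the Fourier multipliers $e^{-|\xi|y}$ and $-\i\,\sgn(\xi)e^{-|\xi|y}$ of $P_y$ and $Q_y$.
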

\begin{proof}
 At first, we show that $F^{(a,b,c,d)}(b\omega)=\frac{e^{\i\frac{bd}{2}\omega^2}}{\sqrt{b}}G^{(0,1,-1,0)}(\omega)$. From the  definition of LCT, we see that
 \begin{equation*}
   \begin{split}
      e^{-\i\frac{bd}{2}\omega^2}F^{(a,b,c,d)}(b\omega) & = e^{-\i\frac{bd}{2}\omega^2}\int_{\mathbf{R}}\frac{1}{\sqrt{\i2\pi b}} e^{\i(\frac{d}{2b}b^2\omega^2-\frac{1}{b}b\omega x+\frac{a}{2b}x^2)}f(x)dx\\
       & =\int_{\mathbf{R}}\frac{1}{\sqrt{\i2\pi b}} e^{-\i\omega x+\frac{a}{2b}x^2}f(x)dx\\
       &=\frac{1}{\sqrt{b}}\int_{\mathbf{R}}\frac{1}{\sqrt{\i2\pi }} e^{-\i\omega x}e^{-\i\frac{a}{2b}x^2}f(x)dx\\
       &=\frac{1}{\sqrt{b}}\int_{\mathbf{R}}\mathcal{K}^{(0,1,-1,0)}(\omega,x)g(x)dx=\frac{1}{\sqrt{b}}G^{(0,1,-1,0)}(\omega).
   \end{split}
 \end{equation*}
By using proper variable substitution, we have

\begin{equation}\label{inverse-representation}
  f_{\mathscr{A}}^{(a,b)}(z) =2b \int_{0}^{\infty}\mathcal{K}^{(d,-b,-c,a)}(z,b\omega)F^{(a,b,c,d)}(b\omega)d\omega.
\end{equation}
Note that $\mathcal{K}^{(d,-b,-c,a)}(z,b\omega)=\frac{1}{\sqrt{b}}e^{-\i\frac{a}{2b}z^2}\mathcal{K}^{(0,-1,1,0)}(z,\omega)e^{-\i\frac{bd}{2}\omega^2}$
 and plug $\frac{e^{\i\frac{bd}{2}\omega^2}}{\sqrt{b}}G^{(0,1,-1,0)}(\omega)$ into (\ref{inverse-representation}), we obtain
 \begin{equation*}
    f_{\mathscr{A}}^{(a,b)}(z) =2 e^{-\i\frac{a}{2b}z^2} \int_{0}^{\infty}\mathcal{K}^{(0,-1,1,0)}(z,\omega)G^{(0,1,-1,0)}(\omega)d\omega.
 \end{equation*}
 Since $\mathcal{K}^{(0,-1,1,0)}$ is the inverse Fourier kernel and $G^{(0,1,-1,0)}$ is the Fourier transform of $g$. Thus, by Titchmarsh's Theorem, we conclude that
 $2\int_{0}^{\infty}\mathcal{K}^{(0,-1,1,0)}(z,\omega)G^{(0,1,-1,0)}(\omega)d\omega$ belongs to $H^2(\mathbf{C}^+)$  and it equals to
 $g(x+\i y)+\i h(x+\i y)$, where $g$ and $h$ are given by (\ref{Poisson-integral}) and (\ref{Conj-Poisson-integral}) respectively. We note that $e^{-\i\frac{a}{2b}z^2}$ is a entire function and therefore    $ f_{\mathscr{A}}^{(a,b)}(z)$ is  holomorphic  on the upper half plane. The proof is complete.
 \end{proof}

From above discussion, we see  that a finite energy signal defined on real line  $\mathbf{R}$  can be extended to an analytic function defined on $\mathbf{C} ^+ $ (upper half complex plane).
 We all know that every  analytic function has many good properties. It is very useful to study signals. More importantly, this  analytic function contains all the information of original signal.  Note that the extension  has two adjustable parameters and therefore it offers some degrees of freedom to the users. We also derive  a convolution representation of the analytic function, it is more convenient to calculate.  To apply this method to image processing, we are going  to study higher dimensional generalization    on Clifford algebra.

\section{Monogenic signal associated with LCT in multidimensional case}

Let $\bme_1,\bme_2,...,\bme_n$ be basic elements satisfying $\bme_i\bme_j+\bme_j\bme_i=-2\delta_{ij}$, where $\delta_{ij}$ is the Kronecker delta function, that is, $\delta_{ij}=1$ if $i=j$ and  $\delta_{ij}=0$ otherwise, $i,j=1,2,...,n$. The real (complex) Clifford algebra \cite{brackx1982clifford}, denoted by $\mathbf{R}^{(n)}$ ($\mathbf{C}^{(n)}$), is the associative algebra generated by $\bme_1,\bme_2,...,\bme_n$, over the real (complex) field $\mathbf{R}$ ($\mathbf{C}$). A general element in $\mathbf{R}^{(n)}$ ($\mathbf{C}^{(n)}$) is of the form $x=\sum_S x_S\bme_S$ where $x_S\in \mathbf{R}$ ($\mathbf{C}$), $\bme_S=\bme_{i_1}\bme_{i_2}\cdots\bme_{i_l}$, and $S$ runs over all the ordered subsets of $\{1,2,...,n\}$, namely $S\in\bigcup_{l=0}^n \Lambda_l$ with $\Lambda_l=\{\{i_1,i_2,\cdots,i_l\}: 1\leq i_1<i_2\cdots<i_l\leq n\}$.   $\Lambda_0$ only contains empty set, for $S=\emptyset$, we write $\bme_S=\bme_0=1$. In particular, $\sum_{S\in\Lambda_0} x_S\bme_S$ and $\sum_{S\in\Lambda_1} x_S\bme_S$ will be denoted by
$\mathrm{Sc}(x)$ and $\mathrm{Vec}(x)$ respectively. They are called the scalar part and  vector part of   (real or complex) Clifford number $x$, respectively.
 Let
\begin{equation*}
   \mathbf{R}^n=\{\underline{x}=x_1\bme_1+x_2\bme_2+\cdots+x_n\bme_n:x_j\in\mathbf{R},j=1,2,...,n\}
\end{equation*}
be   identical with the usual Euclidean space $\mathbf{R}^n$, and
 \begin{equation*}
   \mathbf{R}^n_1=\{x_0+\underline{x}: x_0\in \mathbf{R}, \underline{x}\in \mathbf{R}^n\}.
\end{equation*}
An element in $\mathbf{R}^n_1$ is called a para-vector. The natural inner product between $x=\sum_Sx_S\bme_S$ and $y=\sum_S y_S\bme_S$ in $\mathbf{C}^{(n)}$ is defined by $\langle x,y\rangle=\sum_Sx_Sy_S$. The norm associated with this inner product is $\abs{x}=\langle x,x\rangle^{\frac{1}{2}}=\left(\sum_S\abs{x_S}^2\right)^\frac{1}{2}$. The conjugate of a para-vector $x=x_0+\underline{x}\in \mathbf{R}^n_1$ is defined as $\overline{x}=x_0-\underline{x}$.

Let $n=3$, then
\begin{equation}\label{clifford number}
\begin{split}
    x= &  1+\i\\
    & +(2+5\i)\bme_1+(1+2\i)\bme_2+(3+\i)\bme_3\\
    &  +(2+6\i)\bme_1\bme_2+(5+3\i)\bme_1\bme_3 +(1+\i)\bme_2\bme_3 \\
    &+(6+9\i)\bme_1\bme_2\bme_3
\end{split}
\end{equation}
is a complex Clifford number (in $\mathbf{C}^{(3)}$). The scalar part of $x$ is
\begin{equation*}
 \mathrm{Sc}(x)=1+\i.
\end{equation*}
The vector part of $x$ is
\begin{equation*}
  \mathrm{Vec}(x)=(2+5\i)\bme_1+(1+2\i)\bme_2+(3+\i)\bme_3.
\end{equation*}

Let $\Omega$ be an open subset of $\mathbf{R}^n_1$  with a piecewise smooth boundary. The function defined on  $\Omega$ taking values in  $\mathbf{C}^{(n)}$ has the form $f(x_0+\underline{x})=\sum_Sf_S(x_0+\underline{x})\bme_S$, where $f_S$ are complex-valued functions. The Dirac operator, a generalization of Cauchy-Riemann operator, is defined by
\begin{equation*}
  D=\frac{\partial}{\partial x_0}+\underline{D}= \frac{\partial}{\partial x_0}+\frac{\partial}{\partial x_1}\bme_1+\frac{\partial}{\partial x_2}\bme_2+\cdots+\frac{\partial}{\partial x_n}\bme_n.
\end{equation*}
 Since $\mathbf{C}^{(n)}$   is not a   commutative algebra, in general   $Df$ is not same as $fD$, they are respectively given by
 \begin{equation*}
  Df=\sum_{j=0}^n\sum_S \frac{\partial f_S}{\partial x_j}\bme_j\bme_S
 \end{equation*}
 and
 \begin{equation*}
  fD=\sum_{j=0}^n\sum_S \frac{\partial f_S}{\partial x_j}\bme_S\bme_j
 \end{equation*}
A  function $f$ is said to be left (resp. right) monogenic in $\Omega$ if $Df=0$ (resp. $fD=0$) in $\Omega$.

\begin{remark}
   In the following sections, we will only use left monogenic functions. Hence we will simply say monogenic for left monogenic.
\end{remark}

Let
\begin{equation*}
  \mathbf{C}_{v}:=\{y=\sum_{j=0}^n y_j\bme_j:   0\leq j\leq n, y_j\in \mathbf{C} \}.
\end{equation*}
be a subset of $\mathbf{C}^{(n)}$.  That is, any element in $\mathbf{C}_{v}$  only consists of scalar part and vector part. If $\sqrt{y_0^2+y_1^2+\cdots+y_n^2}\sqrt{y_1^2+y_2^2+\cdots+y_n^2}\neq 0$, then
\begin{equation*}
  \begin{split}
    y= & y_0+y_1\bme_1+\cdots+y_n\bme_n \\
     = &  \sqrt{y_0^2+ \cdots+y_n^2}\left( \frac{y_0}{ \sqrt{y_0^2 +\cdots+y_n^2} }  +\frac{y_1\bme_1 +\cdots+y_n\bme_n}{\sqrt{y_1^2 +\cdots+y_n^2}}\frac{\sqrt{y_1^2 +\cdots+y_n^2}}{\sqrt{y_0^2+ \cdots+y_n^2}} \right)\\
     =&A  \left( \cos\theta +\mathbf{I}\sin\theta \right)\\
     =&A e^{\mathbf{I}\theta}
  \end{split}
\end{equation*}
is the polar form of $y\in \mathbf{C}_{v}$. Note that $A$ and $\theta$ are complex-valued, we shall call them   complex 'norm' and complex 'phase' respectively. The complex 'phase' is given by
\begin{equation*}
  \theta=  \begin{cases}
 \frac{\pi}{2},& y_0= 0;\\
 \arctan\frac{\sqrt{y_1^2 +\cdots+y_n^2}}{y_0},  &y_0\neq 0;
\end{cases}
\end{equation*}
where $\arctan z=\frac{1}{2\i} \ln\frac{1+\i z}{1-\i z}$ and $\ln z=\ln\abs{z}+\i \arg z$ with $-\pi<\arg z\leq \pi$.

We denote the integrable and square integrable  $\mathbf{C}^{(n)}$-valued functions (defined on $\Omega \in \mathbf{R}^n$) by $L^1(\Omega,\mathbf{C}^{(n)})$ and  $L^2(\Omega,\mathbf{C}^{(n)})$ respectively. If $f\in L^1(\mathbf{R}^n,\mathbf{C}^{(n)})$, the LCT of $f$ is defined by
\begin{equation*}
  F^{(a,b,c,d)}(\underline{\xi}) :=
 \int_{\mathbf{R}^n}\mathcal{K}^{(a,b,c,d)}( \underline{\xi},\underline{x})f(\underline{x})d\underline{x},
\end{equation*}
where
\begin{equation*}
   \mathcal{K}^{(a,b,c,d)}( \underline{\xi},\underline{x}):=\frac{1}{\sqrt{\i2\pi b}}e^{\i(\frac{d}{2b} \abs{\underline{\xi}}^2-\frac{1}{b}   \langle \underline{\xi}, \underline{x}\rangle +\frac{a}{2b}\abs{\underline{x}}^2)}.
\end{equation*}

If in addition, $F^{(a,b,c,d)}\in L^1(\mathbf{R}^n,\mathbf{C}^{(n)})$,  then the inversion LCT formula holds, that is
\begin{equation*}
  f(\underline{x})=\int_{\mathbf{R}^n}\mathcal{K}^{(d,-b,-c,a)}(\underline{x},\underline{\xi})F^{(a,b,c,d)}(\underline{\xi})d\underline{\xi},~~~~\mathrm{a.e.}
\end{equation*}

Like the one dimensional case, the LCT can   be  extended to $L^2(\mathbf{R}^n,\mathbf{C}^{(n)})$ as well.

To extend the notion of analytic signal to higher dimensional, we give the generalized monogenic signal as follows:

\begin{definition}[Generalized monogenic signal]
  For $f\in  L^2(\mathbf{R}^n,\mathbf{C}^{(n)})$, the generalized monogenic signal is defined by
  \begin{equation*}
 f_{\mathscr{M}}^{(a,b)}(\underline{x}):=f(\underline{x})-\sum_{j=1}^n\mathscr{R}_j^{(a,b)}(f)(\underline{x})\bme_j,
\end{equation*}
where
\begin{equation*}
 \mathscr{R}_j^{(a,b)}(f)(\underline{x}):=\frac{\Gamma(\frac{n+1}{2})}{\pi^{\frac{n+1}{2}}}\mathrm{p.v.}
 \int_{\mathbf{R}^n}\frac{e^{-\i\frac{a}{2b}\abs{\underline{x}}^2}(x_j-t_j)f(\underline{t})e^{\i\frac{a}{2b}\abs{\underline{t}}^2}}{\abs{\underline{x}-\underline{t}}^{n+1}}d\underline{t},
\end{equation*}
is the $j$-th generalized Reisz transform of $f$.
\end{definition}
\begin{remark}
  By definition, if $f$ is real-valued, then $f_{\mathscr{M}}^{(a,b)}$ is $\mathbf{C}_{v}$-valued.
\end{remark}

Applying  the LCT to $ f_{\mathscr{M}}^{(a,b)}$, we have
\begin{equation*}
  \mathscr{L}^{(a,b,c,d)}(f_{\mathscr{M}}^{(a,b)})(\underline{\xi})=F^{(a,b,c,d)}(\underline{\xi}) \left(1+\i \frac{\underline{\xi}}{\abs{\underline{\xi}}}\right).
\end{equation*}
Therefore if $F^{(a,b,c,d)}\in   L^1(\mathbf{R}^n,\mathbf{C}^{(n)})\cap L^2(\mathbf{R}^n,\mathbf{C}^{(n)})$, the inversion LCT formula gives
\begin{equation*}
   f_{\mathscr{M}}^{(a,b)}(\underline{x})= \int_{\mathbf{R}^n}\mathcal{K}^{(d,-b,-c,a)}(\underline{x},\underline{\xi})(1+\i \frac{\underline{\xi}}{\abs{\underline{\xi}}})
   F^{(a,b,c,d)}(\underline{\xi})d\underline{\xi},~~~~\mathrm{a.e.}
\end{equation*}

We define
\begin{equation*}
   f_{\mathscr{M}}^{(a,b)}(x_0+\underline{x}):= \frac{e^{-\i\frac{a}{2b}\abs{x_0+\underline{x}}^2}  }{\sqrt{-\i 2\pi b}} \int_{\mathbf{R}^n} e^{-\frac{x_0}{b}\abs{\underline{\xi}}}e^{\i\frac{1}{b}<\underline{x},\underline{\xi}>}(1+\i \frac{\underline{\xi}}{\abs{\underline{\xi}}})
   F^{(a,b,c,d)}(\underline{\xi})e^{-\i\frac{d}{2b}\abs{\underline{\xi}}^2}d\underline{\xi},
\end{equation*}
in  $\mathbf{R}_1^{n,+}:=\{(x_0,\underline{x}):x_0>0,\underline{x}\in \mathbf{R}^n\}$.

By some similar argument to Theorem  \ref{Titchmash-LCT}, we have following result.

\begin{theorem}\label{Titchmash-LCT-clifford}
   If $f\in   L^2(\mathbf{R}^n,\mathbf{C}^{(n)})$ and its  LCT $F^{(a,b,c,d)}\in L^1 (\mathbf{R}^n,\mathbf{C}^{(n)})$. Let $g(\underline{t})=e^{ \i\frac{a }{2b}\abs{\underline{t}}^2}f(\underline{t})$. Then
    $  f_{\mathscr{M}}^{(a,b)}(x_0+\underline{x})$   has the following representation:
    \begin{equation*}
     f_{\mathscr{M}}^{(a,b)}(x_0+\underline{x})=e^{-\i\frac{a}{2b}\abs{x_0+\underline{x}}^2}
     \left[g(x_0+ \underline{x})+  h(x_0+ \underline{x})\right ]
    \end{equation*}
    where
    \begin{equation}\label{Poisson-integral-clifford}
      g(x_0+ \underline{x})=\int_{\mathbf{R}^n}P_{x_0}(\underline{x}-\underline{t})g(\underline{t})d\underline{t}
    \end{equation}
    and
    \begin{equation}\label{Conj-Poisson-integral-clifford}
      h(x_0+ \underline{x})
      =\int_{\mathbf{R}^n}Q_{x_0}(\underline{x}-\underline{t})g(\underline{t})d\underline{t}
    \end{equation}
    where $P_{x_0}(\underline{x})=\frac{\Gamma(\frac{n+1}{2})}{\pi^{\frac{n+1}{2}}}\frac{x_0}{\abs{x_0+\underline{x}}^{n+1}}$ and $Q_{x_0}(\underline{x})=\frac{\Gamma(\frac{n+1}{2})}{\pi^{\frac{n+1}{2}}}
    \frac{\underline{\overline{x}}}{\abs{x_0+\underline{x}}^{n+1}}$ are Poisson and conjugate Poisson kernel respectively.
\end{theorem}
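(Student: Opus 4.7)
The plan is to follow the blueprint of the proof of Theorem~\ref{Titchmash-LCT} essentially verbatim, exploiting the fact that every Clifford multiplication occurring in the argument (the unit vector $\underline{\xi}/|\underline{\xi}|$) sits outside the scalar chirp factors, so the dimensional step is almost purely notational. Throughout I would keep the auxiliary scalar-valued function $g(\underline{t}):=e^{\i a|\underline{t}|^2/(2b)}f(\underline{t})$ in view and pass to its (classical) Fourier transform.

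First I would prove the multidimensional analogue of the opening identity of the 1D proof: after the scaling $\underline{\xi}\mapsto b\underline{\xi}$,
\[
F^{(a,b,c,d)}(b\underline{\xi}) = \frac{e^{\i bd|\underline{\xi}|^2/2}}{\sqrt{b}}\,G^{(0,1,-1,0)}(\underline{\xi}),
\]
where $G^{(0,1,-1,0)}$ is the Fourier transform of $g$. The computation is formally identical to the 1D case because the kernel constant $\frac{1}{\sqrt{\i 2\pi b}}$ is the same and the chirp terms depend only on $|\underline{x}|^2$ and $|\underline{\xi}|^2$. In parallel I would record the corresponding factorization of the inverse kernel,
\[
\mathcal{K}^{(d,-b,-c,a)}(x_0+\underline{x},b\underline{\xi}) = \tfrac{1}{\sqrt{b}}\,e^{-\i a|x_0+\underline{x}|^2/(2b)}\,\mathcal{K}^{(0,-1,1,0)}(x_0+\underline{x},\underline{\xi})\,e^{-\i bd|\underline{\xi}|^2/2},
\]
which is again proved by a direct algebraic manipulation on the exponent.

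Substituting both identities into the integral defining $f_{\mathscr{M}}^{(a,b)}(x_0+\underline{x})$, the factors $e^{\pm\i bd|\underline{\xi}|^2/2}$ cancel, the chirp $e^{-\i a|x_0+\underline{x}|^2/(2b)}$ pulls out of the integral, and what remains is a Cauchy-type integral
\[
\int_{\mathbf{R}^n} \mathcal{K}^{(0,-1,1,0)}(x_0+\underline{x},\underline{\xi}) \Bigl(1+\i\tfrac{\underline{\xi}}{|\underline{\xi}|}\Bigr) G^{(0,1,-1,0)}(\underline{\xi})\,d\underline{\xi}
\]
(up to a scalar normalization). This is precisely the Fourier-side description of the monogenic (Cauchy) extension of $g$ to the upper half space $\mathbf{R}_1^{n,+}$. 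Using the classical multiplier identities for the Poisson kernel $P_{x_0}$ and its conjugate $Q_{x_0}$ — whose Fourier symbols are $e^{-x_0|\underline{\xi}|}$ and a scalar multiple of $\tfrac{\underline{\xi}}{|\underline{\xi}|}e^{-x_0|\underline{\xi}|}$ respectively — Plancherel's identity converts the scalar part of this integral into the convolution (\ref{Poisson-integral-clifford}) and the vector part into (\ref{Conj-Poisson-integral-clifford}).

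The main obstacle I anticipate is the rigorous identification carried out in the last step: the Riesz symbol $\underline{\xi}/|\underline{\xi}|$ is only bounded and is discontinuous at the origin, so the convergence of the Cauchy integral and the interchange of integrations must be controlled carefully. Under the standing hypotheses $f\in L^2(\mathbf{R}^n,\mathbf{C}^{(n)})$ and $F^{(a,b,c,d)}\in L^1$ (hence $G^{(0,1,-1,0)}\in L^1$ and $g\in L^2$), dominated convergence and Plancherel suffice, but the decomposition of the monogenic Cauchy extension into Poisson plus Riesz-conjugate Poisson parts is essentially the higher-dimensional analogue of Titchmarsh's theorem and is best cited from the Clifford analysis literature rather than reproved from scratch. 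Once that identification is in hand, the remainder of the argument is pure bookkeeping, and the explicit formulas for $P_{x_0}$ and $Q_{x_0}$ in the statement fall out immediately.
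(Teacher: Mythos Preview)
Your proposal is correct and follows exactly the route the paper intends: the paper gives no separate proof of Theorem~\ref{Titchmash-LCT-clifford} but simply states that it follows ``by some similar argument to Theorem~\ref{Titchmash-LCT}'', and your write-up is precisely that similar argument, carrying the scaling identity $F^{(a,b,c,d)}(b\underline{\xi})=e^{\i bd|\underline{\xi}|^2/2}G^{(0,1,-1,0)}(\underline{\xi})/\sqrt{b}$ and the kernel factorization over to $\mathbf{R}^n$ and then invoking the Clifford-analytic analogue of Titchmarsh's theorem to identify the remaining Cauchy integral with the Poisson/conjugate-Poisson convolutions. The only point worth tightening is the one you already flag: the identification of the Fourier multipliers $e^{-x_0|\underline{\xi}|}$ and $\i\,\underline{\xi}|\underline{\xi}|^{-1}e^{-x_0|\underline{\xi}|}$ with $P_{x_0}$ and $Q_{x_0}$ should be stated (or cited) explicitly, since it replaces the appeal to the classical Titchmarsh theorem in the one-dimensional proof.
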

\begin{remark}
In the complex case,  $f_{\mathscr{A}}^{(a,b)}(z)$ is analytic. In the Clifford case, however, $f_{\mathscr{M}}^{(a,b)}(x_0+\underline{x})$ is not monogenic.  Nevertheless, it is "almost" monogenic, because $$e^{\i\frac{a}{2b}\abs{x_0+\underline{x}}^2}f_{\mathscr{M}}^{(a,b)}(x_0+\underline{x})$$ is monogenic in $\mathbf{R}_1^{n,+}$.
\end{remark}

Note that $f_{\mathscr{M}}^{(a,b)}(x_0+\underline{x})$   is $\mathbf{C}_{v}$-valued.  Let
\begin{equation*}
  f_{\mathscr{M}}^{(a,b)}=f_0+f_1\bme_1+f_2\bme_2+\cdots+f_n\bme_n,
\end{equation*}
then  it has the following polar form:
\begin{equation}\label{polar-form-of-monogenic-signal}
 f_{\mathscr{M}}^{(a,b)}(x_0+\underline{x})=A(x_0+ \underline{x})e^{\mathbf{I}(x_0+\underline{x})\theta(x_0+\underline{x})}
\end{equation}
where
\begin{equation*}
A =\sqrt{f_0^2+f_1^2+\cdots+f_n^2}
\end{equation*}
is called the local complex amplitude,
\begin{equation*}
  \theta=\arctan\frac{\sqrt{f_1^2 +f_2^2\cdots+f_n^2}}{f_0}
\end{equation*}
is called the complex phase,
\begin{equation*}
  r=\mathbf{I}\theta
\end{equation*}
with $\mathbf{I}=\frac{f_1\bme_1+f_2\bme_2 +\cdots+f_n\bme_n}{\sqrt{f_1^2 +\cdots+f_n^2}}$ is called the local complex phase vector.

\begin{remark}
   In general, the newly defined $A$ and $\theta$ are complex-valued. By choosing specific parameters $a,b$  ($a=0, b=1$ for example ), $A$, $\theta$ reduce to be real-valued, and with   range  $[0,\infty)$, $(-\frac{\pi}{2},\frac{\pi}{2}]$ respectively.
\end{remark}

Suppose that the local complex amplitude never be zero, we call $\rho=\ln A$ the local complex attenuation. Then we can write (\ref{polar-form-of-monogenic-signal}) as
\begin{equation}\label{polar-form-of-monogenic-signal-new}
 f_{\mathscr{M}}^{(a,b)}(x_0+\underline{x})=e^{\rho(x_0+\underline{x})}e^{\mathbf{I}(x_0+\underline{x})\theta(x_0+\underline{x})}
\end{equation}

Note that $e^{\i\frac{a}{2b}\abs{x_0+\underline{x}}^2}f_{\mathscr{M}}^{(a,b)}(x_0+\underline{x})$ is monogenic in $\mathbf{R}_1^{n,+}$. Applying Dirac operator to this function, we have
\begin{equation*}
  (\frac{\partial}{\partial x_0}+\underline{D})e^{\i\frac{a}{2b}\abs{x_0+\underline{x}}^2}
  e^{\rho(x_0+\underline{x})}e^{r(x_0+\underline{x})}=0.
\end{equation*}

By direct computation, we obtain
\begin{align}
    &\i \frac{a}{b}x_0+\frac{\partial \rho}{\partial x_0}+ \mathrm{Sc}[(\underline{D}e^r)e^{-r}] = 0,  \label{CauchyRiemann1} \\
   & \i \frac{a}{b}\underline{x} + \frac{\partial r}{\partial x_0} + \underline{D}\rho -
   \mathrm{Vec}[(\underline{D}\mathbf{I})\mathbf{I}]\sin^2\theta+(\sin\theta\cos\theta-\theta)
   \frac{\partial \mathbf{I}}{\partial x_0}=0. \label{CauchyRiemann2}
\end{align}
In fact, (\ref{CauchyRiemann1}) and (\ref{CauchyRiemann2}) constitute an analogue of Cauchy-Riemann system.

\section{Edge detection}

We aim to apply above theory to image processing. Let $n=2$, consider an image $u(\underline{x})$ defined in $\mathbf{R}^2$. By Theorem  \ref{Titchmash-LCT-clifford}, we can construct a function $f(x_0+\underline{x})$ defined    in $\mathbf{R}_1^{2,+}$. If we write $f(x_0+\underline{x})$ as
\begin{equation*}
\begin{split}
   f(x_0+\underline{x})= & A(x_0+ \underline{x})e^{\mathbf{I}(x_0+\underline{x})\theta(x_0+\underline{x})} \\
    =& e^{\rho(x_0+\underline{x})}e^{r(x_0+\underline{x})}
\end{split}
\end{equation*}
 It is interesting to see that   $f(x_0+\underline{x})$ is a "good" function (because it is "almost" monogenic and satisfies (\ref{CauchyRiemann1}), (\ref{CauchyRiemann2}) ).  More importantly, $f(x_0+\underline{x})$ contains all the information of $u(\underline{x})$.

 We introduce the following methods to detect edges of images.
 \begin{method}[LCA]
 Applying the Dirac operator $\underline{D}$ to the local complex attenuation $\rho$. Computing  the norm of $\underline{D}\rho$ to get the gradient map.
\end{method}

 \begin{method}[MDCPC]
Computing modified differential complex phase congruency (MDCPC) to detect edges, where MDCPC is defined by
\begin{equation*}
 \i \frac{a}{b}\underline{x} + \frac{\partial r}{\partial x_0}   -
   \mathrm{Vec}[(\underline{D}\mathbf{I})\mathbf{I}]\sin^2\theta+(\sin\theta\cos\theta-\theta)
   \frac{\partial \mathbf{I}}{\partial x_0}
\end{equation*}
\end{method}


Method 1 and Method 2  are based on the amplitude information and the phase information of  the monogenic function respectively.  With parameters $\{a,b,c,d\}=\{0,1,-1,0\}$, the LCT reduces to classical Fourier transform (FT). The monogenic signal associated with FT has been introduced by  Yang et al. in \cite{yang2017edge}.  
   They  use local attenuation (LA) and modified differential   phase congruency (MDPC) to detect edges. We generalize    their    results. LA  and MDPC  are generalized to LCA and MDCPC.  In the present study,
   we use three different images for the comparison of LCA (associated with LCT) and LA (associated with FT). 
 We also compare the performance of MDCPC (associated with LCT) and MDPC (associated with FT).

 The experimental results show the superiority of our methods. The
adjustable parameters help us to get better results in edge detection.

\begin{figure}[!htb]
  \centering
  \includegraphics[width=15.3cm]{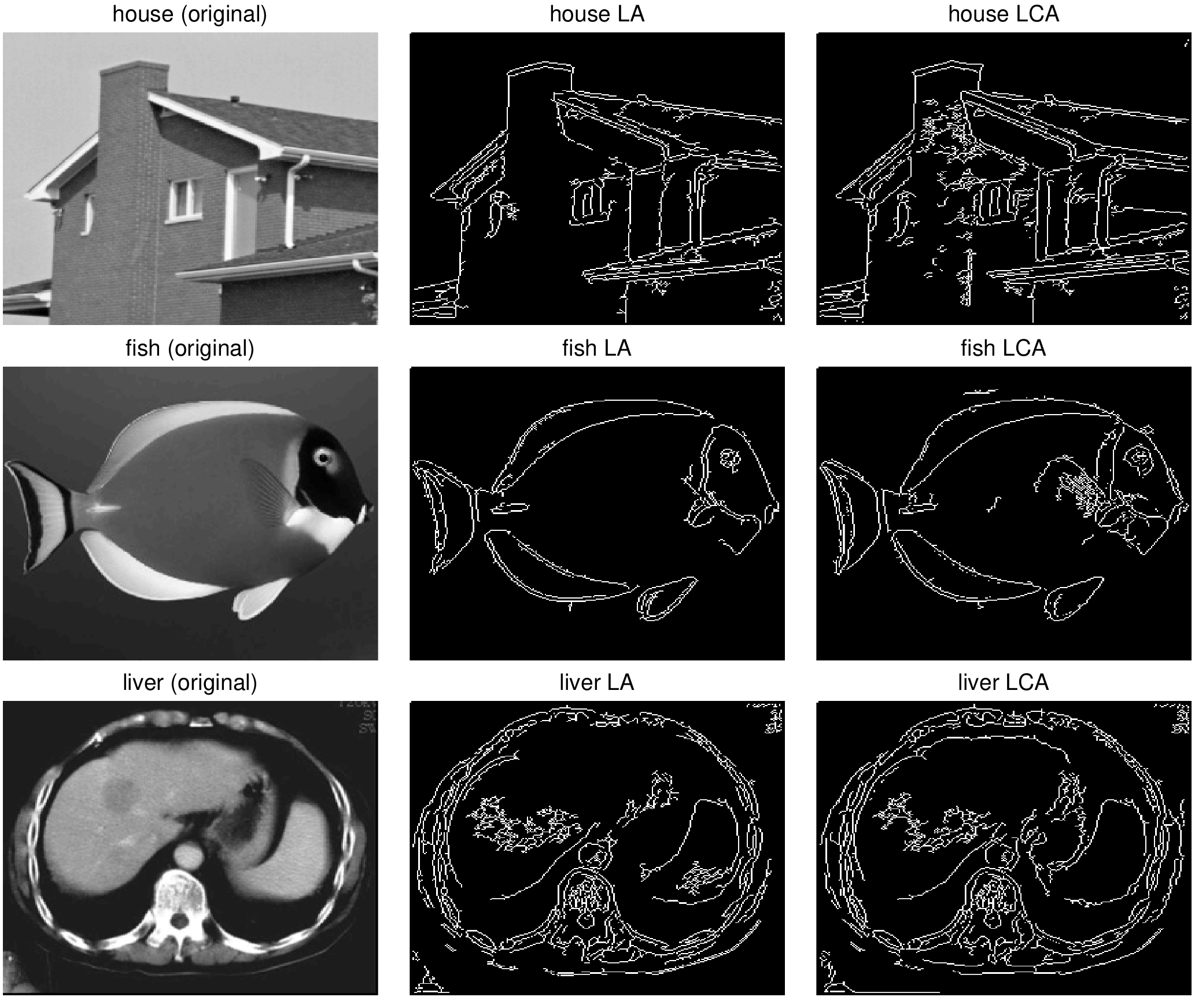}\\
  \caption{Edge detection results by LA and LCA.}\label{1}
\end{figure}

\begin{figure}[!htb]
  \centering
  \includegraphics[width=15.3cm]{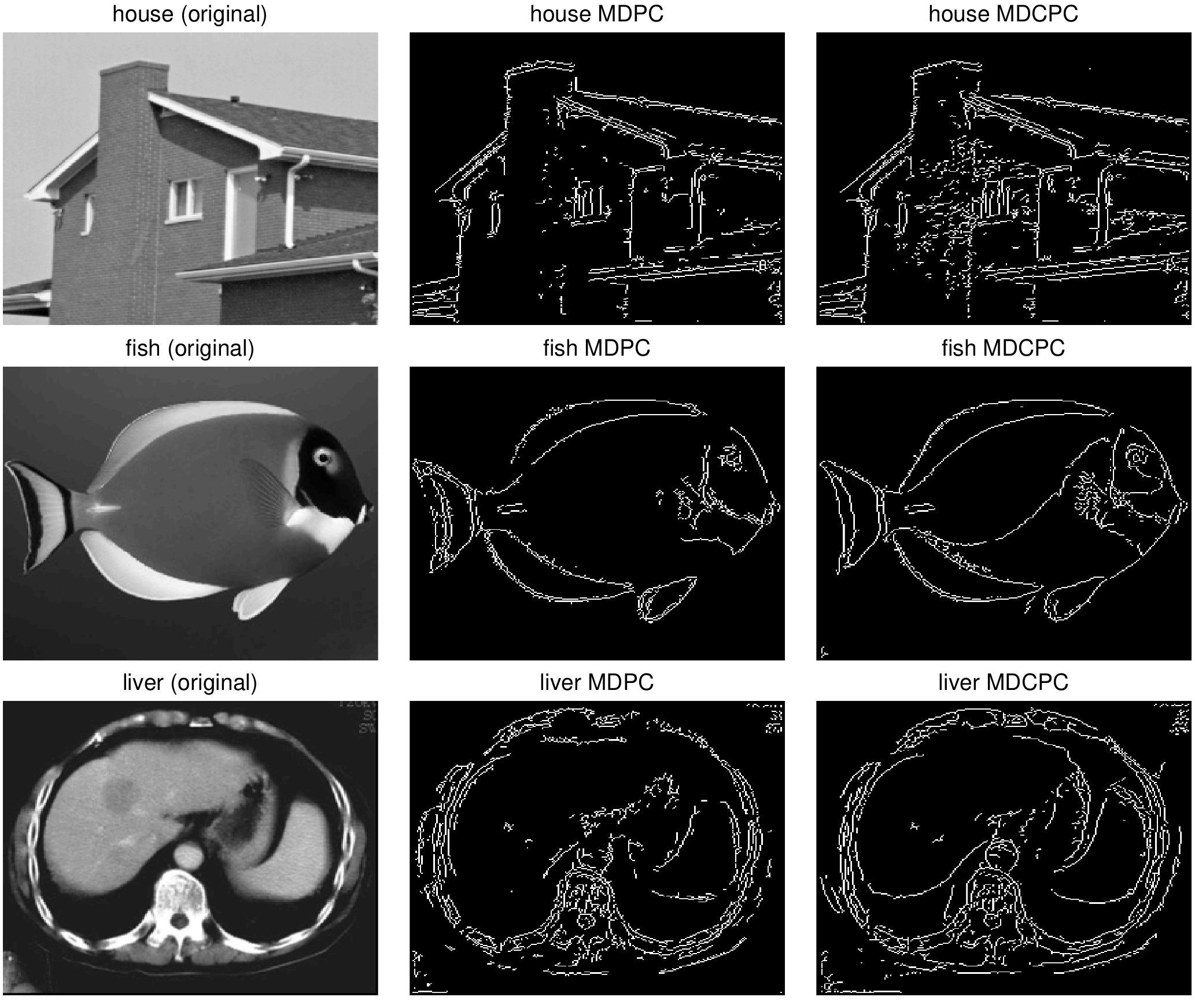}\\
  \caption{Edge detection results by MDPC and MDCPC.}\label{2}
\end{figure}


{
\bibliographystyle{IEEEtran}
\bibliography{IEEEabrv,myreference20220826}

\begin{thebibliography}{1}
\providecommand{\url}[1]{#1}
\csname url@samestyle\endcsname
\providecommand{\newblock}{\relax}
\providecommand{\bibinfo}[2]{#2}
\providecommand{\BIBentrySTDinterwordspacing}{\spaceskip=0pt\relax}
\providecommand{\BIBentryALTinterwordstretchfactor}{4}
\providecommand{\BIBentryALTinterwordspacing}{\spaceskip=\fontdimen2\font plus
\BIBentryALTinterwordstretchfactor\fontdimen3\font minus
  \fontdimen4\font\relax}
\providecommand{\BIBforeignlanguage}[2]{{%
\expandafter\ifx\csname l@#1\endcsname\relax
\typeout{** WARNING: IEEEtran.bst: No hyphenation pattern has been}%
\typeout{** loaded for the language `#1'. Using the pattern for}%
\typeout{** the default language instead.}%
\else
\language=\csname l@#1\endcsname
\fi
#2}}
\providecommand{\BIBdecl}{\relax}
\BIBdecl

\bibitem{moshinsky1971linear}
M.~Moshinsky and C.~Quesne, ``Linear canonical transformations and their
  unitary representations,'' \emph{J. Math. Phys.}, vol.~12, no.~8, pp.
  1772--1780, 1971.

\bibitem{healy2016linear}
J.~J. Healy, M.~A. Kutay, H.~M. Ozaktas, and J.~T. Sheridan, \emph{Linear
  Canonical Transforms}.\hskip 1em plus 0.5em minus 0.4em\relax Springer, 2016.

\bibitem{shinde2011two}
S.~Shinde, ``Two channel paraunitary filter banks based on linear canonical
  transform,'' \emph{IEEE Transactions on Signal Processing}, vol.~59, no.~2,
  pp. 832--836, 2011.

\bibitem{shi2012sampling}
J.~Shi, X.~Liu, X.~Sha, and N.~Zhang, ``Sampling and reconstruction of signals
  in function spaces associated with the linear canonical transform,''
  \emph{IEEE Transactions on Signal Processing}, vol.~60, no.~11, pp.
  6041--6047, 2012.

\bibitem{fu2008generalized}
Y.~Fu and L.~Li, ``Generalized analytic signal associated with linear canonical
  transform,'' \emph{Optics Communications}, vol. 281, no.~6, pp. 1468--1472,
  2008.

\bibitem{brackx1982clifford}
F.~Brackx, R.~Delanghe, and F.~Sommen, \emph{Clifford Analysis}, ser. Research
  Notes in Mathematics.\hskip 1em plus 0.5em minus 0.4em\relax Boston, London,
  Melbourne: Pitman Advanced Publishing Company, 1982, vol.~76.

\bibitem{yang2017edge}
Y.~Yang, K.~I. Kou, and C.~Zou, ``Edge detection methods based on modified
  differential phase congruency of monogenic signal,'' \emph{Multidimensional
  Systems and Signal Processing}, pp. 1--21, 2017.

\end{thebibliography}
}









\end{document}